\theoremstyle{plain}
\newtheorem{thm}{Theorem}[section]
\newtheorem{lemm}[thm]{Lemma}
\theoremstyle{definition}
\newtheorem{rem}[thm]{Remark}
\renewcommand{\div}{\operatorname{div}}
\newcommand{\dB}{\dot{B}}
\newcommand{\supp}{\operatorname{supp}}
\newcommand{\n}[1]{{\left\|#1\right\|}}
\renewcommand{\leq}{\leqslant}
\begin{document}
\title[$2$D stationary Navier--Stokes equation on the whole plane]
{Counter examples for bilinear estimates \\ related to the two-dimensional stationary \\ Navier--Stokes equation}
\author{Mikihiro Fujii}
\address{Institute of Mathematics for Industry Kyushu University, Fukuoka 819--0395, Japan}
\email{fujii.mikihiro.096@m.kyushu-u.ac.jp}
\keywords{counter examples, bilinear estimates, two-dimensional stationary Navier--Stokes equation, ill-posedness, scaling critical Besov spaces}
\subjclass[2020]{42B35,35R25}
\begin{abstract}
In this paper, we are concerned with bilinear estimates related to the two-dimensional stationary Navier--Stokes equation.
By establishing concrete counter examples, we prove the bilinear estimates fail for almost all scaling critical Besov spaces.
Our result may be closely related to an open problem whether the two-dimensional stationary Navier--Stokes equation on the whole plane $\mathbb{R}^2$ is well-posed or ill-posed in scaling critical Besov spaces.
\end{abstract}
\maketitle

\section{Introduction}\label{sec:intro}
Let $n \geqslant 2$ be an integer and let $1 \leq p,q \leq \infty$.
We consider the following bilinear estimate in the homogeneous Besov space $\dB_{p,q}^{\frac{n}{p}-1}(\mathbb{R}^n)$:
\begin{align}\label{bilin-1}
    \n{(-\Delta)^{-1}\mathbb{P}\div(u\otimes v)}_{\dB_{p,q}^{\frac{n}{p}-1}(\mathbb{R}^n)}
    \leq
    C
    \|u\|_{\dB_{p,q}^{\frac{n}{p}-1}(\mathbb{R}^n)}
    \|v\|_{\dB_{p,q}^{\frac{n}{p}-1}(\mathbb{R}^n)}
\end{align}
for real valued vector fields $u,v \in \dB_{p,q}^{\frac{n}{p}-1}(\mathbb{R}^n)$ satisfying $\div u= \div v= 0$,
with some positive constant $C=C(n,p,q)$ independent of given functions $u$ and $v$.
Here $\mathbb{P}:= I + \nabla\div(-\Delta)^{-1} = \left\{ \delta_{j,k} + \partial_{x_j}\partial_{x_k}(-\Delta)^{-1} \right\}_{1 \leq j,k \leq n}$ denotes the Helmholtz projection onto the divergence free vector fields.
It is proved in \cite{Kan-Koz-Shi-19} that the estimate \eqref{bilin-1} in the higher-dimensional cases $n \geqslant3$ holds for $1 \leq p \leq n$, $1 \leq q \leq \infty$, whereas \cite{Tsu-19-ARMA} showed that it fails for $p=n$, $2<q \leq \infty$ or $n < p \leq \infty$, $1 \leq q \leq \infty$. 
The aim of this paper is to reveal the two-dimensional case is completely different from the higher-dimensional cases and show that the estimate \eqref{bilin-1} with $n=2$ fails for almost all $1 \leq p,q \leq \infty$.

Before we state the main result precisely, we mention the background of this study and the motivation for starting it.
The estimate \eqref{bilin-1} plays a significant role in the analysis of $n$-dimensional stationary Navier--Stokes equation
\begin{align}\label{eq:sns}
    \begin{cases}
    -\Delta u + (u \cdot \nabla)u + \nabla p = f, &\qquad x \in \mathbb{R}^n,\\
    \div u = 0, & \qquad x \in \mathbb{R}^n,
    \end{cases}
\end{align}
where $u=u(x):\mathbb{R}^n \to \mathbb{R}^n$ and $p=p(x):\mathbb{R}^n \to \mathbb{R}$ denote the unknown velocity fields and pressure of the fluid, respectively, whereas $f=f(x):\mathbb{R}^n \to \mathbb{R}^n$ is a given external force.
We note that applying $(-\Delta)^{-1}\mathbb{P}$ to the first equation of \eqref{eq:sns} and using $\mathbb{P} u = u$, $(u \cdot \nabla)u=\div(u \otimes u)$ and $\mathbb{P}(\nabla p) = 0$, 
we see that the stationary Navier--Stokes equation \eqref{eq:sns} is reformulated as
\begin{align}\label{eq:re-sns}
    u=(-\Delta)^{-1}\mathbb{P}f - (-\Delta)^{-1}\mathbb{P}\div(u\otimes u), \qquad x \in \mathbb{R}^n.
\end{align}
In general, it is well-known as the Fujita--Kato principle (see \cite{Fuj-Kat-64}) that it is important to consider the solvability of a partial differential equation in the critical function space with respect to the scaling transform that keeps the equation invariant.
In the case of the stationary Navier--Stokes equation \eqref{eq:re-sns}, if $u$ solves \eqref{eq:re-sns} for some given external force $f$, then the scaled functions
$f_{\lambda}(x):=\lambda^3f(\lambda x)$ and $u_{\lambda}(x):=\lambda u(\lambda x)$ also satisfy \eqref{eq:re-sns} for all $\lambda >0$.
Then, since it holds 
\begin{align}
    \| f_{\lambda} \|_{\dB_{p,q}^{\frac{n}{p}-3}(\mathbb{R}^n)} = \| f \|_{\dB_{p,q}^{\frac{n}{p}-3}(\mathbb{R}^n)},
    \qquad
    \| u_{\lambda} \|_{\dB_{p,q}^{\frac{n}{p}-1}(\mathbb{R}^n)}=\| u \|_{\dB_{p,q}^{\frac{n}{p}-1}(\mathbb{R}^n)}
\end{align}
for all dyadic numbers $\lambda>0$, we see that $f \in \dB_{p,q}^{\frac{n}{p}-3}(\mathbb{R}^n)$ and $u \in \dB_{p,q}^{\frac{n}{p}-1}(\mathbb{R}^n)$ are the scaling critical classes.
Kaneko--Kozono--Shimizu \cite{Kan-Koz-Shi-19} established the bilinear estimate \eqref{eq:re-sns} in the case of $n \geqslant 3$, $1\leq p < n$ and $1 \leq q \leq \infty$.
Making use of the contraction mapping principle via this bilinear estimate,
they \cite{Kan-Koz-Shi-19} considered the well-posedness of \eqref{eq:re-sns} in the scaling critical Besov spaces framework and proved that for $1 \leq p < n$ and $1 \leq q \leq \infty$, \eqref{eq:re-sns} with $n \geqslant 3$ possesses a unique small solution $u \in \dB_{p,q}^{\frac{n}{p}-1}(\mathbb{R}^n)$ provided that  the external force $f \in \dB_{p,q}^{\frac{n}{p}-3}(\mathbb{R}^n)$ is sufficiently small.
On the other hand, Tsurumi \cites{Tsu-19-JMAA,Tsu-19-ARMA} proved that \eqref{eq:re-sns} with $n \geqslant 3$ is ill-posed for $n <p \leq \infty$, $1 \leq q \leq \infty$ and $p=n$, $2<q\leq \infty$ in the sense that the solution map $\dB_{p,q}^{\frac{n}{p}-3}(\mathbb{R}^n)\ni f \mapsto u \in \dB_{p,q}^{\frac{n}{p}-1}(\mathbb{R}^n)$ is discontinuous by constructing counter examples of the bilinear estimate \eqref{bilin-1}.
Recently, Li--Yu--Zhu \cite{Li-Yu-Zhu} investigated the remaining case $p=n$, $1 \leq q\leq 2$.
See \cites{Koz-Yam-95-PJA,Koz-Yam-95-IUMJ,Tsu-19-DIE,Tsu-20,Tsu-23,Tsu-19-N} for other related studies.

In contrast to the higher-dimensional case $\mathbb{R}^n$ with $n \geqslant3$, the question for the well-posedness and ill-posedness of two-dimensional stationary Navier--Stokes equation \eqref{eq:re-sns} with $n=2$ in the scaling critical Besov spaces $(f,u) \in ( \dB_{p,q}^{\frac{2}{p}-3}(\mathbb{R}^2), \dB_{p,q}^{\frac{2}{p}-1}(\mathbb{R}^2) )$ remains as an open problem for all $1 \leq p,q \leq \infty$.
This is because it is quite hard to show the two-dimensional bilinear estimate \eqref{bilin-1} with $n=2$ for any $1 \leq p,q \leq \infty$.
The aim of this paper is to show that the two-dimensional bilinear estimate \eqref{bilin-1} with $n=2$ fails for almost all $1 \leq p,q \leq \infty$.
More precisely, for any $1 \leq p \leq \infty$, $1 \leq q < \infty$ or $2 \leq p \leq \infty$, $q=\infty$, we prove that the following bilinear estimate fails
\begin{align}\label{bilin-2}
    \n{(-\Delta)^{-1}\mathbb{P}\div(u\otimes v)}_{\dB_{p,q}^{\frac{2}{p}-1}(\mathbb{R}^2)}
    \leq
    C
    \|u\|_{\dB_{p,q}^{\frac{2}{p}-1}(\mathbb{R}^2)}
    \|v\|_{\dB_{p,q}^{\frac{2}{p}-1}(\mathbb{R}^2)}
\end{align}
with real valued vector fields $u,v \in \dB_{p,q}^{\frac{n}{p}-1}(\mathbb{R}^2)$ satisfying $\div u= \div v= 0$ and some positive constant $C=C(p,q)$ independent of given functions $u$ and $v$.
Furthermore, for the remaining case $1 \leq p < 2$, $q=\infty$, we show that the following bilinear estimate fails
\begin{align}\label{bilin-3}
    \n{(-\Delta)^{-1}\mathbb{P}\div(u\otimes v)}_{\dB_{p,\infty}^{\frac{2}{p}-1}(\mathbb{R}^2)}
    \leq
    C
    \|u\|_{\dB_{p,\infty}^{\frac{2}{p}-1}(\mathbb{R}^2)}
    \|v\|_{\dB_{p',\infty}^{\frac{2}{p'}-1}(\mathbb{R}^2)}
\end{align}
with real valued vector fields $u \in \dB_{p,\infty}^{\frac{2}{p}-1}(\mathbb{R}^2)$ and $v \in \dB_{p',\infty}^{\frac{2}{p'}-1}(\mathbb{R}^2)$ satisfying $\div u= \div v= 0$
and some positive constant $C=C(p)$ independent of given functions $u$ and $v$.
Here, we note that the estimate \eqref{bilin-3} is slightly stronger than \eqref{bilin-2} due to the embedding $\dB_{p,\infty}^{\frac{2}{p}-1}(\mathbb{R}^2) \hookrightarrow \dB_{p',\infty}^{\frac{2}{p'}-1}(\mathbb{R}^2)$ ($1 \leq p \leq 2$, $1/p+1/p'=1$).

Now, our main result of this paper reads as follows.

\begin{thm}\label{thm}
    Let $p$ and $q$ be as in following cases (i) or (ii):
    \begin{itemize}
        \item [(i)] $1 \leq p \leq \infty$ and $1 \leq q < \infty$,
        \item [(ii)] $2\leq p \leq \infty$ and $q = \infty$.
    \end{itemize}
    Then, there exists a sequence $\{ u_N \}_{N \in \mathbb{N}} \subset \dB_{p,q}^{\frac{2}{p}-1}(\mathbb{R}^2)$ of real valued vector fields on $\mathbb{R}^2$ such that $\div u_N =0$ and
    \begin{gather}
        \lim_{N\to \infty}\| u_N \|_{\dB_{p,q}^{\frac{2}{p}-1}(\mathbb{R}^2)}=0,\\
        \liminf_{N\to \infty} \left\| (-\Delta)^{-1}\mathbb{P}\div(u_N \otimes u_N) \right\|_{\dB_{p,q}^{\frac{2}{p}-1}(\mathbb{R}^2)}>0.
    \end{gather}

    Moreover, for $1 \leq p < 2$ and $q=\infty$,
    there exist two sequences $\{ u_N \}_{N \in \mathbb{N}} \subset \dB_{p,\infty}^{\frac{2}{p}-1}(\mathbb{R}^2)$ and $\{ v_N \}_{N \in \mathbb{N}} \subset \dB_{p',\infty}^{\frac{2}{p'}-1}(\mathbb{R}^2)$ of real valued vector fields on $\mathbb{R}^2$ such that 
    $\div u_N = \div v_N = 0$ and
    \begin{gather}
        \lim_{N\to \infty}\| u_N \|_{\dB_{p,\infty}^{\frac{2}{p}-1}(\mathbb{R}^2)}=\lim_{N\to \infty}\| v_N \|_{\dB_{p',\infty}^{\frac{2}{p'}-1}(\mathbb{R}^2)}=0,\\
        \liminf_{N\to \infty} \left\| (-\Delta)^{-1}\mathbb{P}\div(u_N \otimes v_N) \right\|_{\dB_{p,\infty}^{\frac{2}{p}-1}(\mathbb{R}^2)}>0.
    \end{gather}
    Here, $p':=p/(p-1)$ denotes the H\"older conjugate index of $p$.
\end{thm}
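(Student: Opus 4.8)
The plan is to exploit that the symbol of $(-\Delta)^{-1}\mathbb{P}\div$ is homogeneous of degree $-1$, so that this operator amplifies strongly at low output frequencies. The guiding idea is to feed it a self-interaction $u\otimes u$ whose low-frequency (in the extreme, ``mean'') part is nonzero and anisotropic: if $T:=\int_{\mathbb{R}^2}u\otimes u\,dx$ is not a scalar multiple of the identity, then
\begin{align}
\widehat{(-\Delta)^{-1}\mathbb{P}\div(u\otimes u)}(\xi)\sim|\xi|^{-1}\,\Sigma(\xi/|\xi|)\,T\qquad(\xi\to0),
\end{align}
where $\Sigma(\omega)T=\big(I-\omega\otimes\omega\big)(T\omega)\not\equiv0$ precisely when $T$ is anisotropic. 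Thus the output carries a genuine $|x|^{-1}$ tail, and since a homogeneous function of degree $-1$ in $\mathbb{R}^2$ lies in $\dB_{p,\infty}^{\frac2p-1}$ but \emph{not} in $\dB_{p,q}^{\frac2p-1}$ for $q<\infty$ (its Littlewood--Paley pieces produce a constant, hence non-$\ell^q$-summable, sequence as $k\to-\infty$), this one structural fact already separates the regimes $q<\infty$ and $q=\infty$.

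For case (i), that is $q<\infty$ and any $p$, I would fix a single real, smooth, compactly supported, divergence-free field $u=\nabla^{\perp}\psi$ with $\psi$ an elongated bump, so that $\div u=0$ exactly and $\int u_1^2\,dx\ne\int u_2^2\,dx$ makes $T$ anisotropic. Such a $u$ has $\|u\|_{\dB_{p,q}^{\frac2p-1}}<\infty$ (compact support with zero mean, the latter automatic for compactly supported divergence-free fields), whereas the computation above forces $\|(-\Delta)^{-1}\mathbb{P}\div(u\otimes u)\|_{\dB_{p,q}^{\frac2p-1}}=\infty$. Setting $u_N:=u/N$ then gives $\|u_N\|_{\dB_{p,q}^{\frac2p-1}}\to0$ together with $\liminf_N\|(-\Delta)^{-1}\mathbb{P}\div(u_N\otimes u_N)\|_{\dB_{p,q}^{\frac2p-1}}=+\infty>0$; if one prefers finite values, truncating the tail below frequency $2^{-N}$ yields a norm $\sim N^{1/q}\to\infty$ instead.

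For $q=\infty$ the tail contributes only $O(|T|)$, so the output norm no longer diverges for free and I must keep $T$ bounded below while sending the critical norm to zero, which I would do with concrete frequency-localized divergence-free wave packets $\nabla^{\perp}(\cos(\kappa x_1)\phi(x/L))$. When $2\le p\le\infty$ (case (ii)) a \emph{single} high-frequency packet with $\kappa=2^{N}$, $L\sim1$ works: its critical norm scales like $\kappa^{\frac2p-1}$ and is cheap since $\frac2p-1\le0$, while the nonzero anisotropic mean of its self-interaction keeps the output bounded below after normalization. For the asymmetric estimate \eqref{bilin-3} with $1\le p<2$ I would instead take a single resonant \emph{pair} of carriers $\kappa$ and $\kappa+\delta$ on a wide common envelope of width $L$, so their product resonates at the small frequency $\delta$; measuring the two factors in the conjugate spaces $\dB_{p,\infty}^{\frac2p-1}$ and $\dB_{p',\infty}^{\frac2{p'}-1}$ is exactly what makes the ratio inflate, because $(\tfrac2p-1)+(\tfrac2{p'}-1)=0$ cancels all powers of $\kappa$ and $L$ and leaves a gain $(\delta L)^{\frac2p-2}\to\infty$ as $\delta L\to\infty$ with $\delta\ll\kappa$.

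The main obstacle is the regime $p<2$, where $s=\frac2p-1>0$ renders high frequencies \emph{expensive} in the input norm: a mass-one packet then has critical norm growing like $\kappa^{s}$, and the symmetric single-interaction gain scales like $(\kappa L)^{2-4/p}<1$, so no concrete symmetric packet inflates. This is exactly why the endpoint $q=\infty$, $p<2$ is approached through the H\"older-balanced estimate \eqref{bilin-3}, whose conjugate-exponent bookkeeping removes the frequency penalty. The remaining steps are routine but must be carried out with care: realizing the fields exactly divergence-free via the stream function $\nabla^{\perp}$, controlling the error terms from the lower-order part of $\nabla^{\perp}$ and from the non-compact Fourier support of the envelopes, and verifying that the resonant low-frequency output of the leading term is genuinely nonzero — the anisotropy/non-cancellation condition $\Sigma(\omega)T\ne0$ — so that it survives the Helmholtz projection.
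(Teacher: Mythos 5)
Your case (i) plan is sound and takes a genuinely different route from the paper: instead of a modulated packet $\nabla^{\perp}(\psi\cos(Mx_1))$ whose self-interaction is split into a clean leading term plus an error (the paper's Lemma \ref{lemm:direct}), you take a fixed field $u=\nabla^{\perp}\psi$ with anisotropic tensor $T=\int u\otimes u\,dx$, note that $\mathscr{F}[\div(u\otimes u)](\xi)=iT\xi+O(|\xi|^2)$, and conclude that the output has a homogeneous degree $-1$ leading part at low frequency, hence constant critical Littlewood--Paley blocks and infinite $\dB_{p,q}^{\frac{2}{p}-1}$-norm for $q<\infty$; rescaling by $1/N$ then satisfies the statement (this is legitimate: the paper's own examples also have infinite output norm when $q<\infty$, the paper merely sums finitely many blocks to extract a finite lower bound). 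Be aware, however, that the step you label routine --- the uniform lower bound $2^{(\frac{2}{p}-1)j}\|\Delta_j(\cdot)\|_{L^p}\geqslant c$ as $j\to-\infty$ --- is the technical heart when $p>2$: lower-bounding $L^p$-norms of frequency-localized functions requires something like the Gaussian-weighted Plancherel argument of the paper's Lemma \ref{lemm:lower-bound} (after Iwabuchi--Ogawa), not just the pointwise $\xi\to0$ asymptotics. Your case (ii) for $2<p\leq\infty$ coincides with the paper's.

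The genuine gaps are at $q=\infty$, $p\leq2$. First, your case (ii) packet has critical norm $\sim\kappa^{\frac{2}{p}-1}$, which does not tend to zero at $p=2$; since your case (iii) excludes $p=2$, the point $p=2$, $q=\infty$ (included in the theorem's case (ii)) is not covered at all --- the paper recovers it from its case (iii) with $p=p'=2$. Second, and more seriously, your resonant-pair construction for $1\leq p<2$ fails: with carriers $\kappa$, $\kappa+\delta$, amplitudes $A_u$, $A_v$ and envelope width $L$, the inputs cost $\|u\|_{\dB_{p,\infty}^{\frac{2}{p}-1}}\|v\|_{\dB_{p',\infty}^{\frac{2}{p'}-1}}\sim A_uA_v\kappa^2L^2$, while the resonant output at frequency $\delta$ (valid when $\delta\gtrsim 1/L$) has norm $\sim A_uA_v\kappa^2\delta^{\frac{2}{p}-2}L^{\frac{2}{p}}$, so the gain is $(\delta L)^{\frac{2}{p}-2}$ with exponent $\frac{2}{p}-2\in(-1,0]$ for $1\leq p<2$: it is bounded by $1$ for all $\delta L\geqslant1$ and saturates at $O(1)$ when $\delta\lesssim 1/L$, so it never diverges; your claim that it blows up as $\delta L\to\infty$ has the sign of the exponent backwards. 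No single packet or single pair can work in this regime. The missing idea is the multi-scale one the paper uses (after Tsurumi): sum $N$ packets at lacunary frequencies $2^{j^2}$ with coefficients $\sim 2^{-\frac{2}{p}j^2}/\sqrt{j\log N}$ for $u_N$ and $\sim 2^{-\frac{2}{p'}j^2}/\sqrt{j\log N}$ for $v_N$, so that, precisely because $q=\infty$, the input norms are suprema of size $1/\sqrt{\log N}$, while the diagonal interactions all contribute with the same sign, each of size $\sim 1/(j\log N)$, to the \emph{same} low-frequency blocks, and $\sum_{j}1/(j\log N)$ stays bounded below. The counterexample comes from this $\ell^\infty$-versus-$\ell^1$ mismatch across $\sim\log N$ scales, which a single resonance cannot reproduce.
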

\begin{rem}
Let us mention remarks on Theorem \ref{thm}.
\begin{itemize}
    \item [(1)]
    As a related study of Theorem \ref{thm}, 
    we refer the work of Tsurumi \cite{Tsu-20} where he constructed counter examples for the fractional Leibniz estimates for the product $fg$ of two scalar functions. 
    Compared with \cite{Tsu-20}, the main difficulty of our problem is that we need to estimate more complicated product terms $\mathbb{P}\div (u \otimes v)$ for some divergence free vector fields $u$ and $v$. 
    In \cites{Tsu-19-JMAA,Tsu-19-ARMA}, Tsurumi overcame this problem by finding some vector fields $u$ and $v$ on $\mathbb{R}^n$ with $n \geqslant 3$ for which the bilinear term $\mathbb{P}\div (u \otimes v)$ has a simple structure.
    However, these idea cannot be directly applied to our problem since these good examples $u$ and $v$ are needed at least three components.
    To prove Theorem \ref{thm}, we find that if we choose $u(x)=\nabla^{\perp}\left( \psi(x)\cos(Mx_1) \right)$ with $M \gg 1$ and some smooth function $\psi$ satisfying \eqref{Psi} below, then the leading term of the low frequency part of $\mathbb{P}\div (u \otimes u)$ has a simple structure. 
    This is the key ingredient of our analysis.
    \item [(2)] 
    By our result, we may conjecture that the two-dimensional stationary Navier--Stokes equation \eqref{eq:sns} with $n=2$ is ill-posed in the scaling critical framework $(f,u) \in ( \dB_{p,q}^{\frac{2}{p}-3}(\mathbb{R}^2),\dB_{p,q}^{\frac{2}{p}-1}(\mathbb{R}^2) )$ for all $p$ and $q$ satisfying (i) or (ii) in Theorem \ref{thm}.
    Indeed, following the standard ill-posedness argument proposed in studies such as \cites{Bou-Pav-08,Tsu-19-JMAA,Tsu-19-ARMA,Yon-10}, we see that constructing a sequence of functions which gives a counter example of the bilinear estimate \eqref{bilin-2} is a key ingredient in the proof.
    Theorem \ref{thm} shows that there exists a sequence $\{ f_N\}_{N\in \mathbb{N}} \subset \dB_{p,q}^{\frac{2}{p}-3}(\mathbb{R}^2)$ such that
    \begin{gather}
        \lim_{N\to \infty} \| f_N \|_{\dB_{p,q}^{\frac{2}{p}-3}(\mathbb{R}^2)}=0,\qquad
        \lim_{N\to \infty} \n{u_N^{(1)}}_{\dB_{p,q}^{\frac{2}{p}-1}(\mathbb{R}^2)} = 0,\\
        \liminf_{N\to \infty} \n{u_N^{(2)}}_{\dB_{p,q}^{\frac{2}{p}-1}(\mathbb{R}^2)} > 0,
    \end{gather}
    where $u_N^{(1)}$ and $u_N^{(2)}$ denote the first and second iteration defined by  
    \begin{align}
        u_N^{(1)}:= (-\Delta)^{-1}\mathbb{P}f_N,\qquad
        u_N^{(2)}:=(-\Delta)^{-1}\mathbb{P}\div(u_N^{(1)}\otimes u_N^{(1)}).
    \end{align}
    Then, we formally decompose the solution $ u_N $ to \eqref{eq:sns} with the external force $f_N$ as 
    \begin{align}
        u_N = u_N^{(1)} + u_N^{(2)} + w_N,
    \end{align}
    where the perturbation $w_N$ should solve
    \begin{align}\label{eq:p}
        \begin{split}
        &-\Delta w_N + 
        \mathbb{P}\div 
        \left(
        u_N^{(1)} \otimes u_N^{(2)}
        +
        u_N^{(2)} \otimes u_N^{(1)}
        +
        u_N^{(2)} \otimes u_N^{(2)}\right.\\
        &\quad
        \left.
        +
        u_N^{(1)} \otimes w_N
        +
        u_N^{(2)} \otimes w_N
        +
        w_N \otimes u_N^{(1)}
        +
        w_N \otimes u_N^{(2)}
        +
        w_N \otimes w_N
        \right)
        =0.
        \end{split}
    \end{align}
    To complete the proof of the ill-posedness, it is necessary to show the existence $w_N$ obeying \eqref{eq:p} and establish some estimates for it.
    However, since it is quite hard to find a Banach space $X(\mathbb{R}^2) \subset \mathscr{S}'(\mathbb{R}^2)$ such that 
    \begin{align}
        \n{(-\Delta)^{-1}\mathbb{P}\div(u\otimes v)}_{X(\mathbb{R}^2)}
        \leq
        C
        \|u\|_{X(\mathbb{R}^2)}
        \|v\|_{X(\mathbb{R}^2)}
    \end{align}
    for all $u,v \in X(\mathbb{R}^2)$ with $\div u =\div v =0$, we cannot control the perturbation $w_N$ in any function space and thus 
    the question of the well-posedness and ill-posedness for \eqref{eq:sns} in critical Besov spaces is still a challenging problem. 
    \item [(3)]
    For case of $1 \leq p <2$ and $q=\infty$, we only show the failure of the stronger estimate \eqref{bilin-3} than \eqref{bilin-2} and we cannot conclude whether the estimate \eqref{bilin-2} holds or not in this case.
\end{itemize}
\end{rem}

We prepare notations which is used in this paper.
Throughout this paper, we denote by $c$ and $C$ the constants, which may differ in each line. 
In particular, $C=C(*,...,*)$ denote the constant which depends only on the quantities appearing in parentheses. 
Let $\mathscr{S}(\mathbb{R}^2)$ be the set of all Schwartz functions on $\mathbb{R}^2$ and let $\mathscr{S}'(\mathbb{R}^2)$ be the 
set of all tempered distributions on $\mathbb{R}^2$.
We consider a family $\{\phi_j\}_{j \in \mathbb{Z}}$ of functions in $\mathscr{S}(\mathbb{R}^2)$ satisfying
\begin{align}
    \begin{dcases}
    0 \leq \widehat{\phi_0}(\xi) \leq 1,\\
    \supp \widehat{\phi_0} \subset \{ \xi \in \mathbb{R}^2\ ;\ 2^{-1} \leq |\xi| \leq 2 \},\\
    \widehat{\phi_j}(\xi) = \widehat{\phi_0}(2^{-j}\xi), \\
    \sum_{j \in \mathbb{Z}}
    \widehat{\phi_j}(\xi)
    =1,
    \qquad
    \xi \in \mathbb{R}^2 \setminus \{ 0 \}.
    \end{dcases}
\end{align}
As a related notion, we define
the family $\{ \Delta_j \}_{j \in \mathbb{Z}}$ of Littlewood-Paley frequency localized operators by
$\Delta_j f := \mathscr{F}^{-1}\left[\widehat{\phi_j} \widehat{f}\right]$.
Then, 
we define the homogeneous Besov spaces $\dB_{p,q}^s(\mathbb{R}^2)$ ($1 \leq p,q \leq \infty$, $s \in \mathbb{R}$) by 
\begin{align}
    \dB_{p,q}^s(\mathbb{R}^2)
    :={}&
    \left\{
    f \in \mathscr{S}'(\mathbb{R}^2) / \mathscr{P}(\mathbb{R}^2)
    \ ; \ 
    \| f \|_{\dB_{p,q}^s(\mathbb{R}^2)}
    <
    \infty
    \right\},\\
    \| f \|_{\dB_{p,q}^s(\mathbb{R}^2)}
    :={}&
    \left\|
    \left\{
    2^{sj}
    \| \Delta_j f \|_{L^p(\mathbb{R}^2)}
    \right\}_{j \in \mathbb{Z}}
    \right\|_{\ell^{q}},
\end{align}
where $\mathscr{P}(\mathbb{R}^2)$ is the set of all polynomials on $\mathbb{R}^2$.
In what follows, for a space $X(\mathbb{R}^2)$ of functions on $\mathbb{R}^2$, we use the abbreviation $\| \cdot \|_{X}=\| \cdot \|_{X(\mathbb{R}^2)}$.
We refer to \cites{Bah-Che-Dan-11,Saw-18} for the basic properties of Besov spaces.

This paper is organized as follows.
In Section \ref{sec:key}, we prepare some key lemmas for the proof of Theorem \ref{thm}.
In Section \ref{sec:pf}, we prove Theorem \ref{thm}.

\section{Key lemmas}\label{sec:key}
In this section, we focus on properties of a function $\psi \in \mathscr{S}(\mathbb{R}^2)$ satisfying 
\begin{align}\label{Psi}
    \begin{cases}
    \widehat{\psi}{\rm\ is\ radial\ symmetric},\\
    0 \leq \widehat{\psi}(\xi) \leq 1,\\
    \supp \widehat{\psi} \subset \{ \xi \in \mathbb{R}^2\ ;\ |\xi|\leq 2 \},\\
    \widehat{\psi}(\xi)=1 \quad {\rm for\ all\ }\xi \in \mathbb{R}^2{\rm\ with\ }|\xi|\leq 1.
    \end{cases}
\end{align}
and establish three lemmas, which are key ingredients of the proof of Theorem \ref{thm}.
\begin{lemm}\label{lemm:direct}
Let $\psi \in \mathscr{S}(\mathbb{R}^2)$ satisfy 
\eqref{Psi}
and let 
\begin{align}
    \varphi(x):=\psi(x)\cos(Mx_1),
\end{align}
where $M\geqslant 10$ is a constant.
Then, it holds
\begin{align}\label{di}
    \Delta_j \div (\nabla^{\perp}\varphi \otimes \nabla^{\perp}\varphi)
    =
    \frac{M^2}{2}
    \Delta_j  
    \begin{pmatrix}
        0 \\ \partial_{x_2}(\psi^2)
    \end{pmatrix}
    +
    \frac{1}{2}
    \Delta_j 
    \div ( \nabla^{\perp} \psi \otimes \nabla^{\perp} \psi )
\end{align}
for all $j \in \mathbb{Z}$ with $j \leq 0$.
Here, we have set $\nabla^{\perp}:=(-\partial_{x_2},\partial_{x_1})$.
\end{lemm}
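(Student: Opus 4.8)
The plan is to use the modulation identity $\cos(Mx_1)=\tfrac12\big(e^{iMx_1}+e^{-iMx_1}\big)$ to split $\varphi$ into two spectrally shifted pieces and then exploit the smallness of the frequencies selected by $\Delta_j$ for $j\leq 0$. Writing $\varphi=\varphi_++\varphi_-$ with $\varphi_\pm:=\tfrac12\psi\,e^{\pm iMx_1}$, each factor is a Schwartz function, so $\Delta_j$, $\nabla^\perp$ and $\div$ all act as honest Fourier multipliers and I may expand
\begin{align}
\nabla^\perp\varphi\otimes\nabla^\perp\varphi
=\sum_{\sigma,\tau\in\{+,-\}}\nabla^\perp\varphi_\sigma\otimes\nabla^\perp\varphi_\tau .
\end{align}
Since $\widehat{\psi}$ is supported in $\{|\xi|\leq2\}$ and modulation by $e^{\pm iMx_1}$ translates the spectrum by $\pm(M,0)$, the field $\nabla^\perp\varphi_\pm$ has Fourier support in $\{|\xi\mp(M,0)|\leq2\}$.

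The decisive step is a frequency-localization argument isolating the two diagonal pieces $\sigma=\tau$. Because products become convolutions on the Fourier side, $\nabla^\perp\varphi_\pm\otimes\nabla^\perp\varphi_\pm$ is spectrally supported in $\{|\xi\mp(2M,0)|\leq4\}\subset\{|\xi|\geqslant 2M-4\}$, and $M\geqslant10$ forces this into $\{|\xi|\geqslant16\}$. Applying $\div$ (multiplication by $\xi$) does not enlarge supports, whereas $\supp\widehat{\phi_j}\subset\{|\xi|\leq2^{j+1}\}\subset\{|\xi|\leq2\}$ for $j\leq0$; the two support sets are disjoint, so $\Delta_j\div(\nabla^\perp\varphi_\pm\otimes\nabla^\perp\varphi_\pm)=0$ whenever $j\leq0$. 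Hence only the cross pieces $\sigma\neq\tau$ survive. In those pieces the modulations cancel, $e^{iMx_1}e^{-iMx_1}=1$, and using $\nabla^\perp\varphi_\pm=\tfrac12 e^{\pm iMx_1}\big(-\partial_{x_2}\psi,\ \partial_{x_1}\psi\pm iM\psi\big)$ a direct computation, relying on $(\partial_{x_1}\psi+iM\psi)+(\partial_{x_1}\psi-iM\psi)=2\partial_{x_1}\psi$ and $(\partial_{x_1}\psi+iM\psi)(\partial_{x_1}\psi-iM\psi)=(\partial_{x_1}\psi)^2+M^2\psi^2$, gives
\begin{align}
\nabla^\perp\varphi_+\otimes\nabla^\perp\varphi_-+\nabla^\perp\varphi_-\otimes\nabla^\perp\varphi_+
=\frac12\,\nabla^\perp\psi\otimes\nabla^\perp\psi
+\frac{M^2}{2}\begin{pmatrix}0&0\\0&\psi^2\end{pmatrix}.
\end{align}

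It then remains to take the divergence, with the convention $(\div T)_i=\partial_{x_j}T_{ij}$: the divergence of the rightmost matrix is $\tfrac{M^2}{2}\big(0,\ \partial_{x_2}(\psi^2)\big)$, while the first matrix produces $\tfrac12\div(\nabla^\perp\psi\otimes\nabla^\perp\psi)$, and applying $\Delta_j$ reproduces exactly \eqref{di}. The main obstacle is the rigorous spectral bookkeeping in the diagonal pieces: one must check carefully that taking products (convolution of spectra), applying $\nabla^\perp$ and $\div$, and then localizing with $\Delta_j$ genuinely annihilate those terms for all $j\leq0$, which is precisely where the hypothesis $M\geqslant10$ enters. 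The cancellation of the modulations in the cross terms and the ensuing algebra are routine once the support structure is in place.
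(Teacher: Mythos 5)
Your proof is correct and follows essentially the same approach as the paper: both isolate the low-frequency part of the quadratic expression and use the disjointness of Fourier supports (modulation by $\pm 2M$ versus $\supp\widehat{\phi_j}\subset\{|\xi|\leq 2\}$ for $j\leq 0$) to annihilate the high-frequency pieces under $\Delta_j$. The only difference is notational: you organize the computation via Euler's formula $\cos(Mx_1)=\tfrac12(e^{iMx_1}+e^{-iMx_1})$ and the four tensor pieces $\sigma,\tau\in\{+,-\}$, whereas the paper computes $\div(\nabla^{\perp}\varphi\otimes\nabla^{\perp}\varphi)$ componentwise and applies the equivalent product-to-sum trigonometric identities.
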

\begin{rem}
    As $M \gg 1$, we regard the first term of the right hand side of \eqref{di} is the leading term.
    This leading term is so simple that we may obtain the appropriate lower bound estimate. See Lemma \ref{lemm:lower-bound} for details.
\end{rem}
\begin{proof}[Proof of Lemma \ref{lemm:direct}]
Let $j \in \mathbb{Z}$ satisfy $j \leq 0$.
By the direct calculations, we see that 
\begin{align}\label{direct-1}
    \div( \nabla^{\perp}\varphi \otimes \nabla^{\perp}\varphi )
    ={}&
    \begin{pmatrix}
        \partial_{x_2}\varphi\partial_{x_1x_2}\varphi - \partial_{x_1}\varphi\partial_{x_2x_2}\varphi \\
        \partial_{x_1}\varphi\partial_{x_1x_2}\varphi - \partial_{x_2}\varphi\partial_{x_1x_1}\varphi
    \end{pmatrix}
\end{align}
and 
\begin{align}
    &
    \partial_{x_1}\varphi(x)
    =
    \partial_{x_1}\psi(x)\cos(Mx_1) - M \psi(x) \sin(Mx_1),\\
    &
    \partial_{x_2}\varphi(x)
    =
    \partial_{x_2}\psi(x)\cos(Mx_1),\\
    &
    \partial_{x_1x_1}\varphi(x)
    =
    \partial_{x_1x_1}\psi(x)\cos(Mx_1) - 2 M\partial_{x_1}\psi(x)\sin(Mx_1) - M^2 \psi(x) \cos(Mx_1),\\
    &
    \begin{aligned}
    \partial_{x_1x_2}\varphi(x)
    ={}&
    \partial_{x_2x_1}\varphi(x)\\
    ={}&
    \partial_{x_1x_2}\psi(x)\cos(Mx_1)
    -
    M\partial_{x_2}\psi(x)\sin(Mx_1),
    \end{aligned}\\
    &
    \partial_{x_2x_2}\varphi(x)
    =
    \partial_{x_2x_2}\psi(x)\cos(Mx_1).
\end{align}
By making use of the elementary properties of the trigonometric functions, we have
\begin{align}
    \partial_{x_2}\varphi(x)\partial_{x_1x_2}\varphi(x)
    =
    \frac{1}{2}
    \partial_{x_2}\psi(x)\partial_{x_1x_2}\psi(x)
    &+
    \frac{1}{2}
    \partial_{x_2}\psi(x)\partial_{x_1x_2}\psi(x)
    \cos(2Mx_1)\\
    &-
    \frac{M}{2}
    (\partial_{x_2}\psi(x))^2
    \sin(2Mx_1).
\end{align}
Here, since the supports of the Fourier transforms of the second and third terms of the above right hand side are included in $\{ \xi \in \mathbb{R}^2\ ;\ 2M-4 \leq |\xi| \leq 2M+4 \}$, they vanish by applying $\Delta_j$.
Thus, we have
\begin{align}
    \Delta_j\left(\partial_{x_2}\varphi\partial_{x_1x_2}\varphi\right)
    =
    \frac{1}{2}
    \Delta_j\left(\partial_{x_2}\psi\partial_{x_1x_2}\psi\right).
\end{align}
Similarly, we obtain
\begin{align}
    &
    \Delta_j\left(\partial_{x_1}\varphi\partial_{x_2x_2}\varphi\right)
    =
    \frac{1}{2}
    \Delta_j\left(\partial_{x_1}\psi\partial_{x_2x_2}\psi\right),\\
    &
    \Delta_j\left(\partial_{x_1}\varphi\partial_{x_1x_2}\varphi\right)
    =
    \frac{M^2}{2}
    \Delta_j\left(\psi\partial_{x_2}\psi\right)
    +
    \frac{1}{2}
    \Delta_j\left(\partial_{x_1}\psi\partial_{x_1x_2}\psi\right),\\
    &
    \Delta_j\left(\partial_{x_2}\varphi\partial_{x_1x_1}\varphi\right)
    =
    -
    \frac{M^2}{2}
    \Delta_j\left(\psi\partial_{x_2}\psi\right)
    +
    \frac{1}{2}
    \Delta_j\left(\partial_{x_2}\psi\partial_{x_1x_1}\psi\right).
\end{align}
Hence, it follows from the above calculations and \eqref{direct-1} that
\begin{align}
    \Delta_j\div( \nabla^{\perp}\varphi \otimes \nabla^{\perp}\varphi )
    ={}&
    \frac{M^2}{2}
    \Delta_j
    \begin{pmatrix}
        0 \\ 2\psi\partial_{x_2}\psi
    \end{pmatrix}\\
    &
    +
    \frac{1}{2}
    \Delta_j
    \begin{pmatrix}
        \partial_{x_2}\psi\partial_{x_1x_2}\psi - \partial_{x_1}\psi\partial_{x_2x_2}\varphi \\
        \partial_{x_1}\psi\partial_{x_1x_2}\psi - \partial_{x_2}\psi\partial_{x_1x_1}\varphi
    \end{pmatrix}\\
    ={}&
    \frac{M^2}{2}
    \Delta_j  
    \begin{pmatrix}
        0 \\ \partial_{x_2}(\psi^2)
    \end{pmatrix}
    +
    \frac{1}{2}
    \Delta_j 
    \div ( \nabla^{\perp} \psi \otimes \nabla^{\perp} \psi )
\end{align}
and we complete the proof.
\end{proof}

\begin{lemm}\label{lemm:direct2}
    For given $N \in \mathbb{N}$, $a_{10},...,a_{N+10}, b_{10},...,b_{N+10} \in \mathbb{R}$ and $\psi \in \mathscr{S}(\mathbb{R}^2)$ satisfying \eqref{Psi},
    we define
    \begin{align}
        u_N
        :=
        \sum_{j=10}^{N+10}
        a_j
        \nabla^{\perp}
        \big(\psi(x)\cos(2^{j^2}x_1)\big),\qquad
        v_N
        :=
        \sum_{j=10}^{N+10}
        b_j
        \nabla^{\perp}
        \big(\psi(x)\cos(2^{j^2}x_1)\big),
    \end{align}
    where $\nabla^{\perp}:=(-\partial_{x_2},\partial_{x_1})$.
    Then, it holds
    \begin{align}
    \Delta_k \div (u_N \otimes v_N)
    ={}&
    \frac{1}{2}
    \sum_{j=10}^{N+10}
    2^{2j^2}
    a_jb_j
    \Delta_k  
    \begin{pmatrix}
        0 \\ \partial_{x_2}(\psi^2)
    \end{pmatrix}\\
    &+
    \frac{1}{2}
    \sum_{j=10}^{N+10}
    a_jb_j
    \Delta_k
    \div ( \nabla^{\perp} \psi \otimes \nabla^{\perp} \psi )
    \end{align}
    for all $k \in \mathbb{Z}$ with $k \leq 0$.
\end{lemm}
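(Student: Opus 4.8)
The plan is to reduce everything to Lemma \ref{lemm:direct} via bilinearity, isolating a \emph{diagonal} contribution. Setting $\varphi_j(x):=\psi(x)\cos(2^{j^2}x_1)$, the bilinearity of $(f,g)\mapsto\div(f\otimes g)$ gives
\begin{align}
    \div(u_N\otimes v_N)
    =
    \sum_{i=10}^{N+10}\sum_{j=10}^{N+10}
    a_i b_j\,
    \div(\nabla^{\perp}\varphi_i\otimes\nabla^{\perp}\varphi_j),
\end{align}
so it suffices to analyse each block $\Delta_k\div(\nabla^{\perp}\varphi_i\otimes\nabla^{\perp}\varphi_j)$ for $k\leq 0$ and then to separate the terms with $i=j$ from those with $i\neq j$.

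For the diagonal terms $i=j$ I would apply Lemma \ref{lemm:direct} with $M=2^{j^2}$; since $j\geqslant 10$ we have $M\geqslant 2^{100}\geqslant 10$, so the hypothesis is satisfied, and using $M^2=2^{2j^2}$ the lemma yields (for the dyadic block of index $k\leq 0$)
\begin{align}
    \Delta_k\div(\nabla^{\perp}\varphi_j\otimes\nabla^{\perp}\varphi_j)
    =
    \frac{2^{2j^2}}{2}\Delta_k
    \begin{pmatrix} 0 \\ \partial_{x_2}(\psi^2) \end{pmatrix}
    +
    \frac{1}{2}\Delta_k\div(\nabla^{\perp}\psi\otimes\nabla^{\perp}\psi).
\end{align}
Multiplying by $a_jb_j$ and summing over $10\leq j\leq N+10$ reproduces exactly the asserted right-hand side, so the proof is complete once the off-diagonal blocks are shown to vanish.

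The main obstacle is precisely showing that $\Delta_k\div(\nabla^{\perp}\varphi_i\otimes\nabla^{\perp}\varphi_j)=0$ for all $k\leq 0$ when $i\neq j$. I would repeat the trigonometric computation from the proof of Lemma \ref{lemm:direct}, but now with two distinct frequencies $M_i=2^{i^2}$ and $M_j=2^{j^2}$: each entry of $\div(\nabla^{\perp}\varphi_i\otimes\nabla^{\perp}\varphi_j)$ is a finite sum of products of a derivative of $\psi$ coming from the $\varphi_i$ factor with one coming from the $\varphi_j$ factor, each product carrying two trigonometric functions at frequencies $M_i$ and $M_j$. The product-to-sum formulas turn every such term into a combination of $\cos$ and $\sin$ at the two frequencies $M_i+M_j$ and $M_i-M_j$; crucially, since $i\neq j$ there is \emph{no} zero-frequency contribution, in contrast with the diagonal case where $M_i-M_j=0$ produced the leading $M^2$ term. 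Each coefficient is a product of two functions with Fourier support in $\{|\xi|\leq 2\}$, hence has Fourier support in $\{|\xi|\leq 4\}$, and after the shift by $M_i\pm M_j$ its Fourier support lies in $\{\,|M_i\pm M_j|-4\leq|\xi|\leq|M_i\pm M_j|+4\,\}$. Because $i,j\geqslant 10$ and $i\neq j$ the frequencies are super-exponentially separated: for $i<j$ one has $|M_i-M_j|=2^{j^2}-2^{i^2}\geqslant 2^{j^2}-2^{(j-1)^2}\geqslant 2^{j^2-1}\geqslant 2^{120}$, and a fortiori $M_i+M_j\geqslant 2^{120}$, so $|M_i\pm M_j|-4>2$. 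Thus every off-diagonal term has Fourier support disjoint from $\supp\widehat{\phi_k}\subset\{|\xi|\leq 2^{k+1}\}\subset\{|\xi|\leq 2\}$ for all $k\leq 0$, so $\Delta_k$ annihilates it and only the diagonal sum survives, giving the stated identity.
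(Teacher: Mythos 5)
Your proof is correct and follows essentially the same route as the paper: split $\div(u_N\otimes v_N)$ by bilinearity into diagonal and off-diagonal blocks, apply Lemma \ref{lemm:direct} with $M=2^{j^2}$ to the diagonal terms, and kill the off-diagonal terms by observing that their Fourier supports lie at frequencies $\gtrsim 2^{120}$, disjoint from $\supp\widehat{\phi_k}$ for $k\leq 0$. The only cosmetic difference is that the paper justifies the off-diagonal support separation via the convolution-support inclusion $\supp\mathscr{F}[fg]\subset\overline{\supp\widehat{f}+\supp\widehat{g}}$ rather than redoing the product-to-sum trigonometric expansion, but the underlying frequency-separation argument is identical.
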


\begin{proof}
Let $\varphi_j(x):=\psi(x)\cos(2^{j^2}x_1)$.
By the definition of $u_N$ and $v_N$, we see that
\begin{align}
    \div (u_N \otimes v_N)
    ={}&
    \sum_{j=10}^{N+10}
    a_jb_j
    \div (\nabla^{\perp}\varphi_j \otimes \nabla^{\perp}\varphi_j)\\
    &+
    \sum_{\substack{10 \leq j,\ell \leq N+10 \\ j \neq \ell}}
    a_jb_{\ell}
    \div (\nabla^{\perp}\varphi_j \otimes \nabla^{\perp}\varphi_{\ell})
\end{align}
Since there holds for integers $10 \leq j,\ell \leq N+10$ with $j \neq \ell$,
\begin{align}
    \supp 
    \mathscr{F}
    \left[ \div (\nabla^{\perp}\varphi_j \otimes \nabla^{\perp}\varphi_{\ell}) \right]
    ={}&
    \supp 
    ( \widehat{\varphi_j} \otimes \widehat{\varphi_{\ell}} )\\
    \subset{}&
    \overline{ \supp \widehat{\varphi_j} + \supp \widehat{\varphi_{\ell}} }\\
    ={}&
    \left\{ \xi=\xi^{(j)}+\xi^{(\ell)}\ ;\ 
    \begin{aligned}
    &
    2^{j^2}-2\leq|\xi^{(j)}| \leq 2^{j^2}+2,\\
    &
    2^{\ell^2}-2 \leq |\xi^{(\ell)}| \leq 2^{\ell^2}+2
    \end{aligned}
    \right\}\\
    \subset{}&
    \left\{
    \xi \in \mathbb{R}^2 \ ; \ 
    |\xi| \geqslant 2^{\max\{j,\ell\}^2}-2^{\min\{j,\ell\}^2}-4
    \right\}\\
    \subset{}&
    \left\{
    \xi \in \mathbb{R}^2 \ ; \ 
    |\xi| \geqslant 2^{11^2}-2^{10^2}-4
    \right\},
\end{align}
we have
\begin{align}
    \sum_{\substack{10 \leq j,\ell \leq N+10 \\ j \neq \ell}}a_jb_{\ell}\Delta_k\div (\nabla^{\perp}\varphi_j \otimes \nabla^{\perp}\varphi_{\ell})=0.
\end{align}
Hence, from Lemma \ref{lemm:direct}, we obtain
\begin{align}
    \Delta_k\div (u_N \otimes v_N)
    ={}&
    \sum_{j=10}^{N+10}
    a_jb_j
    \Delta_k\div (\nabla^{\perp}\varphi_j \otimes \nabla^{\perp}\varphi_j)\\
    ={}&
    \frac{1}{2}
    \sum_{j=10}^{N+10}
    2^{2j^2}
    a_jb_j
    \Delta_k  
    \begin{pmatrix}
        0 \\ \partial_{x_2}(\psi^2)
    \end{pmatrix}\\
    &+
    \frac{1}{2}
    \sum_{j=10}^{N+10}
    a_jb_j
    \Delta_k
    \div ( \nabla^{\perp} \psi \otimes \nabla^{\perp} \psi ),
\end{align}
which completes the proof.

\end{proof}

\begin{lemm}\label{lemm:lower-bound}
    Let $\psi \in \mathscr{S}(\mathbb{R}^2)$ satisfy \eqref{Psi}.
    Then, for any $1 \leq p \leq \infty$, there exists a positive constant $c=c(p)$ such that
    \begin{align}
        2^{(\frac{2}{p}-1)j}\n{\Delta_j(-\Delta)^{-1}\mathbb{P}
        \begin{pmatrix}
            0 \\ \partial_{x_2}(\psi^2)
        \end{pmatrix}
        }_{L^p}
        \geqslant c
    \end{align}
    for all $j \in \mathbb{Z}$ with $j \leq -2$.
\end{lemm}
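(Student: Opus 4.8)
The plan is to extract a single scalar component of the vector field, identify its Fourier multiplier, and then exploit the exact homogeneity of that multiplier so that the weight $2^{(\frac{2}{p}-1)j}$ is completely absorbed by a rescaling; this reduces the claim to a non-vanishing statement that is uniform in $j$.

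First I would reduce to the second component. Writing $w:=\psi^2$ and using $\mathbb{P}=I+\nabla\div(-\Delta)^{-1}$, a direct computation on the Fourier side gives
\begin{align}
\mathscr{F}\left[(-\Delta)^{-1}\mathbb{P}\begin{pmatrix}0 \\ \partial_{x_2}w\end{pmatrix}\right]_2(\xi)
= i\,m(\xi)\,\widehat{w}(\xi),\qquad m(\xi):=\frac{\xi_1^2\xi_2}{|\xi|^4},
\end{align}
and the symbol $m$ is homogeneous of degree $-1$. Since the Euclidean length of a vector dominates any of its components, it suffices to bound from below $\n{\Delta_j h}_{L^p}$, where $h$ denotes this second component.

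Next I would carry out the rescaling. Substituting $\xi=2^j\eta$ in the inverse Fourier integral and using $m(2^j\eta)=2^{-j}m(\eta)$, one finds $\Delta_j h(x)=2^jH_j(2^jx)$ with
\begin{align}
H_j:=\mathscr{F}^{-1}\!\left[\widehat{\phi_0}(\eta)\,i\,m(\eta)\,\widehat{w}(2^j\eta)\right].
\end{align}
A change of variables then yields $\n{\Delta_j h}_{L^p}=2^{(1-\frac{2}{p})j}\n{H_j}_{L^p}$, so that $2^{(\frac{2}{p}-1)j}\n{\Delta_j h}_{L^p}=\n{H_j}_{L^p}$ and the weight disappears entirely. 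Thus the lemma is equivalent to a uniform lower bound $\n{H_j}_{L^p}\geq c$ for all $j\leq-2$.

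Finally I would produce this uniform bound in two steps. On the fixed annulus $\supp\widehat{\phi_0}\subset\{1/2\leq|\eta|\leq2\}$ we have $|2^j\eta|\leq2^{j+1}\leq1/2$, so $\widehat{w}(2^j\eta)\to\widehat{w}(0)=:c_0>0$ together with all its $\eta$-derivatives as $j\to-\infty$; since the Fourier supports remain in a fixed compact set, this forces $H_j\to c_0\,G$ in every $L^p$, where $G:=\mathscr{F}^{-1}[\widehat{\phi_0}\,i\,m]\not\equiv0$. Hence $\liminf_{j\to-\infty}\n{H_j}_{L^p}=c_0\n{G}_{L^p}>0$, which settles all sufficiently negative $j$. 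For the finitely many remaining indices I would invoke the positivity $\widehat{w}(\xi)>0$ for $|\xi|\leq1/2$, which follows from $\widehat{\psi}\geq0$ and $\widehat{\psi}\equiv1$ on the unit ball (so that $\widehat{w}$, being a positive multiple of $\widehat{\psi}*\widehat{\psi}$, is bounded below on the low-frequency ball), to conclude that the symbol of $H_j$ does not vanish identically and hence $\n{H_j}_{L^p}>0$; taking the minimum over this finite set together with the asymptotic bound gives the desired $c=c(p)$. The main obstacle is exactly this uniformity: the scaling makes the leading contribution $j$-independent, but one must rule out any degeneration of $\n{H_j}_{L^p}$, which hinges on the non-vanishing of $\widehat{\psi^2}$ on the low-frequency ball and on quantifying the convergence $H_j\to c_0G$ in $L^p$ (for which an estimate of the form $\n{f}_{L^p}\leq C\,\n{(1-\Delta_\eta)^N\widehat{f}}_{L^1}$ for functions $f$ with Fourier support in a fixed compact set is convenient).
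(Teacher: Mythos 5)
Your proof is correct, but it takes a genuinely different route from the paper's. The paper argues quantitatively and splits into two cases: for $1\le p\le 2$ it uses the reverse Bernstein inequality to pass to $L^2$, then Plancherel and a restriction to the sector $A_j=\{2^{j-1}\le|\xi|\le 2^{j+1},\ |\xi|/2\le|\xi_2|\le|\xi|/\sqrt2\}$, on which the symbol $\xi_1^2\xi_2/|\xi|^4$ has size $2^{-j}$, combined with the bound $(\widehat\psi*\widehat\psi)(\xi)\ge\pi/4$ for $|\xi|\le 1/2$; for $2\le p\le\infty$, where Bernstein goes the wrong way, it invokes the Gaussian-weight trick of Iwabuchi--Ogawa (H\"older against $e^{-2^{2j}|x|^2}$, after which Plancherel turns the weighted $L^2$ norm into the heat extension $e^{2^{2j}\Delta}$ of the Fourier transform, whose positive integrand yields a pointwise lower bound on the annulus $|\xi|\sim 2^j$). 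You instead exploit the exact homogeneity of degree $-1$ of the multiplier $m(\xi)=\xi_1^2\xi_2/|\xi|^4$ to cancel the weight $2^{(\frac{2}{p}-1)j}$ by rescaling, reducing the lemma to a $j$-uniform lower bound on $\|H_j\|_{L^p}$, and then conclude softly: $H_j\to \widehat{\psi^2}(0)\,G$ in every $L^p$ as $j\to-\infty$, with $G=\mathscr{F}^{-1}[\widehat{\phi_0}\,i\,m]\not\equiv 0$ fixed, while the finitely many remaining indices $j$ are handled by non-vanishing of the symbol (which uses the same lower bound on $\widehat\psi*\widehat\psi$ near the origin that the paper uses). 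Your approach buys uniformity in $p$ --- a single argument covers all $1\le p\le\infty$, with no case distinction and no Gaussian trick --- and is arguably simpler; what it gives up is effectivity: the paper's constant $c$ is explicit and the same for every $j\le -2$, whereas yours comes from a limit argument plus a minimum over finitely many qualitatively positive numbers, so it cannot be read off from the proof. Both proofs ultimately rest on the same two structural facts: the non-degeneracy of $\xi_1^2\xi_2/|\xi|^4$ off the coordinate axes, and the strict positivity of $\widehat{\psi^2}$ on a ball around the origin.
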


\begin{proof}
Let $j \in \mathbb{Z}$ satisfy $j \leq -2$.
We first consider the case $1 \leq p \leq 2$.
Let
\begin{align}
    A_j
    :=
    \left\{
    \xi \in \mathbb{R}^2 \ ; \ 
    2^{j-1} \leq |\xi| \leq 2^{j+1},\
    \frac{|\xi|}{2} \leq |\xi_2| \leq \frac{|\xi|}{\sqrt{2}} 
    \right\}.
\end{align}
Then, we have
\begin{align}
    &
    2^{(\frac{2}{p}-1)j}
    \n{\Delta_j (-\Delta)^{-1}
    \mathbb{P}
        \begin{pmatrix}
            0 \\ \partial_{x_2}(\psi^2)
        \end{pmatrix}
    }_{L^p}\\
    &\quad
    =
    2^{(\frac{2}{p}-1)j}
    \n{   
    \begin{pmatrix}
            0 \\ \Delta_j \left(1+\partial_{x_2}^2(-\Delta)^{-1}\right) (-\Delta)^{-1}\partial_{x_2}(\psi^2)
    \end{pmatrix}
    }_{L^p}\\
    &\quad
    \geqslant{}
    c
    \n{\Delta_j \left(1+\partial_{x_2}^2(-\Delta)^{-1}\right)(-\Delta)^{-1}\partial_{x_2}(\psi^2)}_{L^2}\\
    &\quad={}
    c
    \n{ \widehat{\phi}(2^{-j}\xi)\left( 1 - \frac{\xi_2^2}{|\xi|^2} \right)\frac{i\xi_2}{|\xi|^2}(\widehat{\psi}*\widehat{\psi})(\xi) }_{L^2}\\
    &\quad
    \geqslant{}
    c2^{-j}
    \n{ \widehat{\phi_0}(2^{-j}\xi)(\widehat{\psi}*\widehat{\psi})(\xi) }_{L^2(A_j)}\\
    &\quad={}
    c\n{ \widehat{\phi_0}(\widetilde{\xi})(\widehat{\psi}*\widehat{\psi})(2^j\widetilde{\xi}) }_{L^2(A_0)}\qquad(\widetilde{\xi}=2^{-j}\xi).
\end{align}
Since $\widehat{\psi}(2^j\widetilde{\xi}-\eta) = \widehat{\psi}(\eta) = 1$ for all $\widetilde{\xi} \in A_0$ and $\eta \in \mathbb{R}^2$ with $|\eta|\leq 1/2$, we see that
\begin{align}
    (\widehat{\psi}*\widehat{\psi})(2^j\widetilde{\xi})
    ={}&\int_{|\eta|\leq 2} \widehat{\psi}(2^j\widetilde{\xi}-\eta) \widehat{\psi}(\eta)d\eta\\
    \geqslant{}&
    \int_{|\eta|\leq \frac{1}{2}} \widehat{\psi}(2^j\widetilde{\xi}-\eta) \widehat{\psi}(\eta)d\eta\\
    ={}&\int_{|\eta| \leq \frac{1}{2}} d\eta
    =\frac{\pi}{4},
\end{align}
which implies
\begin{align}
    \n{\Delta_j (-\Delta)^{-1}\partial_{x_2}(\psi^2)}_{L^2} 
    \geqslant 
    c\n{ \widehat{\phi_0} }_{L^2(A_0)}>0
\end{align}

Next, we consider the case of $2 \leq p \leq \infty$.
A portion of the following calculations are based on \cite{Iwa-Oga-21}*{Proposition 3.1}. 
Using the Bernstein and H\"older inequalities and the Plancherel theorem, it holds
\begin{align}
    &2^{(\frac{2}{p}-1)j}
    \n{\Delta_j (-\Delta)^{-1}
    \mathbb{P}
        \begin{pmatrix}
            0 \\ \partial_{x_2}(\psi^2)
        \end{pmatrix}
    }_{L^p}\\
    &\quad
    =
    2^{(\frac{2}{p}-1)j}
    \n{   
    \begin{pmatrix}
            0 \\ \Delta_j \left(1+\partial_{x_2}^2(-\Delta)^{-1}\right) (-\Delta)^{-1}\partial_{x_2}(\psi^2)
    \end{pmatrix}
    }_{L^p}\\
    &\quad\geqslant
    c
    2^{(\frac{2}{p}-1)j}
    \left\| \Delta_j\left( 1 + \partial_{x_2}^2(-\Delta)^{-1} \right)(-\Delta)^{-\frac{3}{2}}\partial_{x_2}^2(\psi^2) \right\|_{L^p}\\
    &\quad\geqslant
    c
    \left\| e^{-2^{2j}|x|^2}\left(\Delta_j\left( 1 + \partial_{x_2}^2(-\Delta)^{-1} \right)(-\Delta)^{-\frac{3}{2}}\partial_{x_2}^2(\psi^2) \right) \right\|_{L^2}\\
    &\quad=
    c
    \left\|e^{2^{2j}\Delta}\left(\mathscr{F}\left[\Delta_j\left( 1 + \partial_{x_2}^2(-\Delta)^{-1} \right)(-\Delta)^{-\frac{3}{2}}\partial_{x_2}^2(\psi^2) \right]\right)(\xi) \right\|_{L^2(\mathbb{R}^2_{\xi})}.
\end{align}
Since it holds 
\begin{align}
    &\left|e^{2^{2j}\Delta}\left(\mathscr{F}\left[\Delta_j\left( 1 + \partial_{x_2}^2(-\Delta)^{-1} \right)(-\Delta)^{-\frac{3}{2}}\partial_{x_2}^2(\psi^2)\right]\right)(\xi)\right|\\
    &\quad 
    =
    \frac{1}{4\pi2^{2j}}
    \int_{\mathbb{R}^2}
    e^{-\frac{|\xi-\eta|^2}{4\cdot2^{2j}}}
    \widehat{\phi_0}(2^{-j}\eta)
    \left( 1 -\frac{\eta_2^2}{|\eta|^2} \right)
    \frac{\eta_2^2}{|\eta|^3} (\widehat{\psi}*\widehat{\psi})(\eta)d\eta\\
    &\quad\geqslant
    c
    2^{-3j}
    \int_{A_j}
    e^{-2^{-2j-2}|\xi-\eta|^2}
    \widehat{\phi_0}(2^{-j}\eta)(\widehat{\psi}*\widehat{\psi})(\eta)d\eta\\
    &\quad=
    c
    2^{-j}
    \int_{A_0}
    e^{-2^{-2}|2^{-j}\xi-\widetilde{\eta}|^2}
    \widehat{\phi_0}(\widetilde{\eta})(\widehat{\psi}*\widehat{\psi})(2^j\widetilde{\eta})d\widetilde{\eta} \qquad (\widetilde{\eta}=2^{-j}\eta)\\
    &\quad 
    \geqslant
    c
    2^{-j}
    \int_{A_0}
    e^{-2^{-2}|2^{-j}\xi-\widetilde{\eta}|^2}
    \widehat{\phi_0}(\widetilde{\eta})d\widetilde{\eta}
    \\
    &\quad 
    \geqslant
    c
    2^{-j}
    \int_{A_0}
    \widehat{\phi_0}(\widetilde{\eta})d\widetilde{\eta}
    =
    c
    2^{-j}
\end{align}
for all $\xi$ with $2^{j-1} \leq |\xi| \leq 2^{j+1}$,
we have
\begin{align}
    \begin{split}
    &2^{(\frac{2}{p}-1)j}
    \n{\Delta_j (-\Delta)^{-1}
    \mathbb{P}
        \begin{pmatrix}
            0 \\ \partial_{x_2}(\psi^2)
        \end{pmatrix}
    }_{L^p}\\
    &\quad 
    \geqslant
    c
    \left\|e^{2^{2j}\Delta}\left(\mathscr{F}\left[\Delta_j\left( 1 + \partial_{x_2}^2(-\Delta)^{-1} \right)(-\Delta)^{-\frac{3}{2}}\partial_{x_2}^2(\psi^2) \right]\right)(\xi) \right\|_{L^2(2^{j-1} \leq |\xi| \leq 2^{j+1})}\\
    &\quad 
    \geqslant{}
    c2^{-j}
    \left(
    \int_{2^{j-1} \leq |\xi| \leq 2^{j+1}} d\xi
    \right)^{\frac{1}{2}}\\
    &\quad 
    =
    c.
    \end{split}
\end{align}
This completes the proof.
\end{proof}

\section{Proof of Theorem \ref{thm}}\label{sec:pf}
Now, we are in a position to present the proof of Theorem \ref{thm}.
The proof is separated into three parts.
\subsection*{(i) The case of $1 \leq p \leq \infty$ and $1 \leq q < \infty$}
Let $\psi \in \mathscr{S}(\mathbb{R}^2)$ satisfy \eqref{Psi} and let
\begin{align}\label{UN}
    u_N(x) := \frac{1}{N^{\frac{1}{2q}}} \nabla^{\perp}\left(\psi(x) \cos (Mx_1)\right),
\end{align}
where $M \geqslant 10$ is a constant to be determined later.
We note that $u_N$ is a divergence-free real valued vector field.
Since $\supp \widehat{u_N} \subset \{ \xi \in \mathbb{R}^2 \ ; \ M-2 \leq |\xi| \leq M+2 \}$,
we see that
\begin{align}\label{UN0}
    \| u_N \|_{\dB_{p,q}^{\frac{2}{p}-1}}
    \leq
    \frac{C}{N^{\frac{1}{2q}}}M^{\frac{2}{p}}\| \psi \|_{L^p}
    \to 0
    \qquad
    {\rm as\ }N \to \infty.
\end{align}
By Lemma \ref{lemm:direct}, we have
\begin{align}
    \Delta_j(-\Delta)^{-1}\mathbb{P}\div(u_N \otimes u_N)
    &={}
    \frac{M^2}{2N^{\frac{1}{q}}}
    \Delta_j
    (-\Delta)^{-1}
    \mathbb{P}
    \begin{pmatrix}
        0 \\ \partial_{x_2}(\psi^2)
    \end{pmatrix}\\
    &\quad+
    \frac{1}{2N^{\frac{1}{q}}}
    \Delta_j(-\Delta)^{-1}\mathbb{P}\div(\nabla^{\perp}\psi \otimes \nabla^{\perp}\psi)\\
    &=:{}
    \Delta_jI_N^{(1)}
    +
    \Delta_jI_N^{(2)}.
\end{align}
We see by Lemma \ref{lemm:lower-bound} that
\begin{align}
    2^{(\frac{2}{p}-1)j}\n{\Delta_j I_N^{(1)}}_{L^p} \geqslant \frac{cM^2}{N^{\frac{1}{q}}}, \qquad j \leq -2,
\end{align}
which yields
\begin{align}\label{A2-2}
    \left\{\sum_{-N-2 \leq j \leq -2}
    \left(
    2^{(\frac{2}{p}-1)j}
    \n{\Delta_j I_N^{(1)}}_{L^p}
    \right)^q
    \right\}^{\frac{1}{q}}
    \geqslant
    cM^2.
\end{align}
It follows from the Bernstein inequality and the embedding $L^1(\mathbb{R}^2) \hookrightarrow \dB_{p,\infty}^{\frac{2}{p}-2}(\mathbb{R}^2)$ that
\begin{align}\label{A2-3}
    \left\{\sum_{-N-2 \leq j \leq -2}
    \left(
    2^{(\frac{2}{p}-1)j}
    \n{\Delta_j I_N^{(2)}}_{L^p}
    \right)^q
    \right\}^{\frac{1}{q}}
    \leq{}&
    C
    \| \nabla^{\perp}\psi \otimes \nabla^{\perp}\psi \|_{\dB_{p,\infty}^{\frac{2}{p}-2}}\\
    \leq{}&
    C
    \| \nabla^{\perp}\psi \otimes \nabla^{\perp}\psi \|_{L^1}\\
    \leq{}&
    C\|\psi\|_{\dot{H}^1}^2.
\end{align}
Hence, we obtain  by \eqref{A2-2} and \eqref{A2-3} that
\begin{align}
    &
    \left\| (-\Delta)^{-1}\mathbb{P}\div(u_N \otimes u_N) \right\|_{\dB_{p,q}^{\frac{2}{p}-1}}\\
    &\quad 
    \geqslant
    \left\{\sum_{-N-2 \leq j \leq -2}
    \left(
    2^{(\frac{2}{p}-1)j}
    \n{\Delta_j I_N^{(1)} + \Delta_j  I_N^{(2)} }_{L^p}
    \right)^q
    \right\}^{\frac{1}{q}}\\
    &\quad 
    \geqslant
    \left\{\sum_{-N-2 \leq j \leq -2}
    \left(
    2^{(\frac{2}{p}-1)j}
    \n{\Delta_j I_N^{(1)}}_{L^p}
    \right)^q
    \right\}^{\frac{1}{q}}
    -
    \left\{\sum_{-N-2 \leq j \leq -2}
    \left(
    2^{(\frac{2}{p}-1)j}
    \n{\Delta_j I_N^{(2)}}_{L^p}
    \right)^q
    \right\}^{\frac{1}{q}}\\
    &\quad 
    \geqslant
    c_0M^2-C_0\|\psi\|_{\dot{H}^1}^2
\end{align}
for some positive constants $c_0=c_0(p,q)$ and $C_0=C_0(p,q)$.
Then, choosing $M$ so large that $c_0M^2-C_0\|\psi\|_{\dot{H}^1}^2 \geqslant c_0$, we complete the proof of this case.

\subsection*{(ii) The case of $2 < p \leq \infty$ and $q = \infty$.}
Let $\psi \in \mathscr{S}(\mathbb{R}^2)$ satisfy \eqref{Psi} and let
\begin{align}
    u_N(x) := \frac{1}{N} \nabla^{\perp}\bigg(\psi(x) \cos (Nx_1)\bigg),
\end{align}
Then, by the similar calculation as in the previous step (i), 
we see that
\begin{align}
    &\| u_N \|_{\dB_{p,\infty}^{\frac{2}{p}-1}} 
    \leq 
    \frac{C}{N}N^{\frac{2}{p}}\| \psi \|_{L^p}
    \leq 
    \frac{C}{N^{1-\frac{2}{p}}}
\end{align}
for $N\geqslant 10$.
By Lemma \ref{lemm:direct}, we may decompose the bilinear term as
\begin{align}
    \Delta_j(-\Delta)^{-1}\mathbb{P}\div(u_N \otimes u_N)
    &={}
    \frac{1}{2}
    \Delta_j
    (-\Delta)^{-1}
    \mathbb{P}
    \begin{pmatrix}
        0 \\ \partial_{x_2}(\psi^2)
    \end{pmatrix}\\
    &\quad+
    \frac{1}{2N^2}
    \Delta_j(-\Delta)^{-1}\mathbb{P}\div(\nabla^{\perp}\psi \otimes \nabla^{\perp}\psi)
\end{align}
Here, we see by Lemma \ref{lemm:lower-bound} that
\begin{align}\label{stp2-1}
    &
    \inf_{k\leq -2} 
    2^{(\frac{2}{p}-1)k}
    \n{\Delta_k(-\Delta)^{-1}\mathbb{P}
    \begin{pmatrix}
        0 \\ \partial_{x_2}(\psi^2)
    \end{pmatrix}
    }_{L^p}
    \geqslant c.
\end{align}
It follows from the Bernstein inequality and the embedding $L^1(\mathbb{R}^2) \hookrightarrow \dB_{p,\infty}^{\frac{2}{p}-2}(\mathbb{R}^2)$ that
\begin{align}\label{stp2-2}
    \n{(-\Delta)^{-1}\mathbb{P}\div( \nabla^{\perp} \psi \otimes \nabla^{\perp}\psi)}_{\dB_{p,\infty}^{\frac{2}{p}-1}}
    \leq
    C
    \n{\nabla^{\perp}\psi \otimes \nabla^{\perp}\psi}_{L^1}
    \leq
    C\| \psi \|_{\dot{H}^1}^2.
\end{align}
Thus, we obtain
\begin{align}
    \left\| (-\Delta)^{-1}\mathbb{P}\div(u_N \otimes u_N) \right\|_{\dB_{p,\infty}^{\frac{2}{p}-1}}
    \geqslant{}
    c-\frac{C\| \psi \|_{\dot{H}^1}^2}{N^2}
\end{align}
for $N\geqslant 10$,
which completes the proof of this case.

\subsection*{(iii) The case of $1 \leq p \leq 2$ and $q=\infty$}
Let $\psi \in \mathscr{S}(\mathbb{R}^2)$ satisfy \eqref{Psi}.
Following the idea in \cite{Tsu-20}, we define
\begin{align}
    &u_N(x) := \frac{1}{\sqrt{\log N}} \sum_{j=10}^{N+10} \frac{1}{2^{\frac{2}{p}j^2}\sqrt{j}}\nabla^{\perp}\left(\psi(x) \cos (2^{j^2}x_1)\right),\\
    &v_N(x) := \frac{1}{\sqrt{\log N}} \sum_{j=10}^{N+10} \frac{1}{2^{\frac{2}{p'}j^2}\sqrt{j}}\nabla^{\perp}\left(\psi(x) \cos (2^{j^2}x_1)\right).
\end{align}
Then, since the support of the Fourier transform of $\nabla^{\perp}\left(\psi(x) \cos (2^{j^2}x_1)\right)$ is included in $\{\xi \in \mathbb{R}^2 \ ; \ 2^{j^2}-2 \leq |\xi| \leq 2^{j^2}+2\}$, we have
\begin{align}
    &
    \| u_N \|_{\dB_{p,\infty}^{\frac{2}{p}-1}}
    \leq
    \frac{C}{\sqrt{\log N}}
    \sup_{10 \leq j \leq N+10} \frac{1}{\sqrt{j}}
    \leq
    \frac{C}{\sqrt{\log N}}\to 0
    \qquad 
    {\rm as\ }N \to \infty,\\
    &
    \| v_N \|_{\dB_{p',\infty}^{\frac{2}{p'}-1}}
    \leq
    \frac{C}{\sqrt{\log N}}
    \sup_{10 \leq j \leq N+10} \frac{1}{\sqrt{j}}
    \leq
    \frac{C}{\sqrt{\log N}}\to 0
    \qquad 
    {\rm as\ }N \to \infty.
\end{align}
By the similar calculation as in the proof of Lemma \ref{lemm:direct}, we see that
\begin{align}
    &\Delta_k(-\Delta)^{-1}\mathbb{P}\div(u_N \otimes v_N)\\
    &\quad={}
    \frac{1}{2\log N}
    \sum_{j=10}^{N+10}
    \frac{1}{j}\Delta_k(-\Delta)^{-1}\mathbb{P}
    \begin{pmatrix}
        0 \\ \partial_{x_2}(\psi^2)
    \end{pmatrix}\\
    &\qquad
    +
    \frac{1}{2\log N}
    \sum_{j=10}^{N+10}
    2^{-2j^2}
    \frac{1}{j}\Delta_k(-\Delta)^{-1}\mathbb{P}\div( \nabla^{\perp}\psi \otimes \nabla^{\perp}\psi)
\end{align}
for all $k \leq -2$.
Then, by \eqref{stp2-1} and \eqref{stp2-2}, we obtain 
\begin{align}
    &\n{(-\Delta)^{-1}\mathbb{P}\div(u_N \otimes v_N)}_{\dB_{p,\infty}^{\frac{2}{p}-1}}\\
    &\quad 
    \geqslant{}
    \sup_{k\leq -2} 
    2^{(\frac{2}{p}-1)k}
    \n{\Delta_k(-\Delta)^{-1}\mathbb{P}\div(u_N \otimes v_N)}_{L^p}\\
    &\quad 
    \geqslant{}
    \frac{1}{2\log N}
    \sum_{j=10}^{N+10}
    \frac{1}{j}\sup_{k\leq -2} 
    2^{(\frac{2}{p}-1)k}
    \n{\Delta_k(-\Delta)^{-1}\mathbb{P}
    \begin{pmatrix}
        0 \\ \partial_{x_2}(\psi^2)
    \end{pmatrix} }_{L^p}\\
    &\qquad
    -
    \frac{1}{2\log N}
    \sum_{j=10}^{N+10}
    2^{-2j^2}
    \frac{1}{j}
    \n{(-\Delta)^{-1}\mathbb{P}\div( \nabla^{\perp} \psi \otimes  \nabla^{\perp} \psi)}_{\dB_{p,\infty}^{\frac{2}{p}-1}}\\
    &\quad 
    \geqslant{}
    \frac{c}{\log N}
    \sum_{j=10}^{N+10}
    \frac{1}{j}
    -
    \frac{C\| \psi \|_{\dot{H}^1}^2}{\log N},
\end{align}
which implies
\begin{align}
    \liminf_{N\to \infty}
    \n{(-\Delta)^{-1}\mathbb{P}\div(u_N \otimes v_N)}_{\dB_{p,\infty}^{\frac{2}{p}-1}}
    \geqslant 
    c.
\end{align}
Thus, we complete the proof.


\noindent
{\bf Conflict of interest statement.}\\
The author has declared no conflicts of interest.

\noindent
{\bf Acknowledgements.} \\
This work was supported by Grant-in-Aid for JSPS Research Fellow, Grant Number JP20J20941.

\begin{bibdiv}
\begin{biblist}
\bib{Bah-Che-Dan-11}{book}{
   author={Bahouri, Hajer},
   author={Chemin, Jean-Yves},
   author={Danchin, Rapha\"{e}l},
   title={Fourier analysis and nonlinear partial differential equations},
   volume={343},
   publisher={Springer, Heidelberg},
   date={2011},
}
\bib{Bou-Pav-08}{article}{
   author={Bourgain, Jean},
   author={Pavlovi\'{c}, Nata\v{s}a},
   title={Ill-posedness of the Navier--Stokes equations in a critical space
   in 3D},
   journal={J. Funct. Anal.},
   volume={255},
   date={2008},
   pages={2233--2247},
}
\bib{Fuj-Kat-64}{article}{
   author={Fujita, Hiroshi},
   author={Kato, Tosio},
   title={On the Navier--Stokes initial value problem. I},
   journal={Arch. Rational Mech. Anal.},
   volume={16},
   date={1964},
   pages={269--315},
}
\bib{Iwa-Oga-21}{article}{
   author={Iwabuchi, Tsukasa},
   author={Ogawa, Takayoshi},
   title={Ill-posedness for the Cauchy problem of the two-dimensional
   compressible Navier-Stokes equations for an ideal gas},
   journal={J. Elliptic Parabol. Equ.},
   volume={7},
   date={2021},
   number={2},
   pages={571--587},
   issn={2296-9020},
   review={\MR{4342639}},
   doi={10.1007/s41808-021-00136-7},
}
\bib{Kan-Koz-Shi-19}{article}{
   author={Kaneko, Kenta},
   author={Kozono, Hideo},
   author={Shimizu, Senjo},
   title={Stationary solution to the Navier--Stokes equations in the scaling
   invariant Besov space and its regularity},
   journal={Indiana Univ. Math. J.},
   volume={68},
   date={2019},
   pages={857--880},
}
\bib{Koz-Yam-95-PJA}{article}{
   author={Kozono, Hideo},
   author={Yamazaki, Masao},
   title={Small stable stationary solutions in Morrey spaces of the
   Navier--Stokes equation},
   journal={Proc. Japan Acad. Ser. A Math. Sci.},
   volume={71},
   date={1995},
   pages={199--201},
}
\bib{Koz-Yam-95-IUMJ}{article}{
   author={Kozono, Hideo},
   author={Yamazaki, Masao},
   title={The stability of small stationary solutions in Morrey spaces of
   the Navier--Stokes equation},
   journal={Indiana Univ. Math. J.},
   volume={44},
   date={1995},
   pages={1307--1336},
}
\bib{Li-Yu-Zhu}{article}{
	author={Li, Jinlu},
    author={Yu, Yanghai},
    author={Zhu, Weipeng},
	title={Ill-posedness for the stationary Navier--Stokes equations in critical Besov spaces},
	journal={arXiv:2204.08295v3},
}
\bib{Saw-18}{book}{
   author={Sawano, Yoshihiro},
   title={Theory of Besov spaces},
   series={Developments in Mathematics},
   volume={56},
   publisher={Springer, Singapore},
   date={2018},
}
\bib{Tsu-19-JMAA}{article}{
   author={Tsurumi, Hiroyuki},
   title={Ill-posedness of the stationary Navier--Stokes equations in Besov
   spaces},
   journal={J. Math. Anal. Appl.},
   volume={475},
   date={2019},
   pages={1732--1743},
}
\bib{Tsu-19-N}{article}{
   author={Tsurumi, Hiroyuki},
   title={Well-posedness and ill-posedness of the stationary Navier--Stokes
   equations in toroidal Besov spaces},
   journal={Nonlinearity},
   volume={32},
   date={2019},
   number={10},
   issn={0951-7715},
}
\bib{Tsu-19-DIE}{article}{
   author={Tsurumi, Hiroyuki},
   title={The stationary Navier--Stokes equations in the scaling invariant
   Triebel--Lizorkin spaces},
   journal={Differential Integral Equations},
   volume={32},
   date={2019},
   pages={323--336},
}
\bib{Tsu-19-ARMA}{article}{
   author={Tsurumi, Hiroyuki},
   title={Well-posedness and ill-posedness problems of the stationary
   Navier--Stokes equations in scaling invariant Besov spaces},
   journal={Arch. Ration. Mech. Anal.},
   volume={234},
   date={2019},
   pages={911--923},
}
\bib{Tsu-20}{article}{
   author={Tsurumi, Hiroyuki},
   title={Counter-examples of the bilinear estimates of the H\"{o}lder type
   inequality in homogeneous Besov spaces},
   journal={Tokyo J. Math.},
   volume={43},
   date={2020},
   pages={295--304},
}
\bib{Tsu-23}{article}{
   author={Tsurumi, Hiroyuki},
   title={The two-dimensional stationary Navier–Stokes equations in toroidal Besov spaces},
   journal={Math. Nachr.},
   volume={00},
   date={2023},
   pages={1--18},
}
\bib{Yon-10}{article}{
   author={Yoneda, Tsuyoshi},
   title={Ill-posedness of the 3D-Navier--Stokes equations in a generalized
   Besov space near $\rm BMO^{-1}$},
   journal={J. Funct. Anal.},
   volume={258},
   date={2010},
   pages={3376--3387},
}
\end{biblist}
\end{bibdiv}

\end{document}